\newtheorem{theorem}{Theorem}
\newtheorem{corollary}{Corollary}
\def\ps@pprintTitle{%
 \let\@oddhead\@empty
 \let\@evenhead\@empty
 \def\@oddfoot{\reset@font\hfil\thepage\hfil}
 \let\@evenfoot\@oddfoot
}
\begin{document}
\begin{frontmatter}

\title{Empirical likelihood ratio tests for Cauchy distribution based on characterization }

\author[label1]{Ganesh Vishnu Avhad }
 \ead{avhadgv@gmail.com}
\author[label1]{Ananya Lahiri}
\author[label2]{Sudheesh K. Kattumannil}  
\address[label1]{Department of Mathematics and Statistics, Indian Institute of Technology, Tirupati, Andhra Pradesh, India}
\address[label2]{Statistical Sciences Division, Indian Statistical Institute, Chennai, Tamil Nadu, India}

 \doublespacing
\begin{abstract}
 Heavy-tailed distributions, such as the Cauchy distribution, are acknowledged for providing more accurate models for financial returns, as the normal distribution is deemed insufficient for capturing the significant fluctuations observed in real-world assets. In this paper, we develop a goodness-of-fit test for the Cauchy distribution based on the characterization. The asymptotic distribution of the test statistic is obtained under the null hypothesis.  In addition to that, we also proposed an empirical likelihood-based test, including the jackknife empirical likelihood (JEL) and the adjusted jackknife empirical likelihood (AJEL). An extensive Monte Carlo simulation studies are conducted to evaluate the finite sample performance of the proposed tests. The simulation results show that the proposed tests have good power compared to others. Finally, the application of the proposed tests is illustrated through the analysing two real data sets.
\end{abstract}

\begin{keyword}
 Cauchy distribution $\cdot$ Cryptocurrency $\cdot$ $U$-Statistics $\cdot$ Jackknife empirical likelihood $\cdot$ Wilk’s theorem.
\end{keyword}
\end{frontmatter}
\doublespacing
\section{Introduction}\label{sec1}
\noindent The Cauchy distribution is widely used in statistical analysis, particularly within finance and economics, due to its relevance in modelling financial returns and economic variables that exhibit extreme values or outliers. Its heavy-tailed nature makes it a valuable distribution, but the lack of a finite mean and variance introduces specific challenges in statistical modelling and hypothesis testing. Numerous applications of the Cauchy distribution are documented in the literature; for instance, it has been utilized across various fields, such as modelling the behaviour of polar and non-polar liquids in porous glasses within the mechanical and electrical theory (\cite{stapf1996proton}), addressing physical anthropology and measurement problems (\cite{johnson1995continuous}), and modelling value-at-risk in risk and financial analysis (\cite{liu2012intermediate}). Thus, a goodness-of-fit test for assessing whether the underlying distribution conforms to a Cauchy form would be extremely useful in practical applications. Recently, based on various characterizations, several authors, including \cite{mahdizadeh2017new}, \cite{mahdizadeh2019goodness}, \cite{villasenor2021goodness}, and \cite{pekgor2023novel}, have proposed goodness-of-fit tests for the Cauchy distribution. 


In recent years, empirical likelihood methods have gained popularity in non-parametric statistics for their flexibility and minimal reliance on strong parametric assumptions. The empirical likelihood ratio test has been particularly valued for constructing confidence regions and hypothesis tests (\cite{liu2023review}). However, it has been observed that traditional empirical likelihood ratio tests may suffer from reduced accuracy and power, especially when applied to small sample sizes or when the underlying distribution deviates from the assumption of normality. To overcome these limitations, \cite{jing2009jackknife} proposed the jackknife empirical likelihood (JEL) ratio test as a more robust alternative. Since then, various tests based on JEL have been developed and explored in the literature, see \cite{feng2012jackknife}, \cite{bhati2020jackknife} and \cite{khan2024testing}. 
The jackknife method, known for reducing bias and variance in statistical estimates, has been employed to enhance the empirical likelihood framework. In addition to addressing both the empty set issue and low coverage probability, the adjusted jackknife empirical likelihood (AJEL) method was proposed by \cite{chen2008adjusted}. This improvement has led to better accuracy of the empirical likelihood ratio test, particularly when dealing with small samples and heavy-tailed distributions such as the Cauchy distribution. In this paper, we propose both a JEL and an AJEL ratio test for the Cauchy distribution.

This manuscript is structured as follows: In Section \ref{sec2}, the test statistic, empirical likelihood methods, JEL and AJEL and their asymptotic distribution are presented. The power properties of these tests are evaluated using Monte Carlo simulations, and the results are presented in Section \ref{SStudy}. Section \ref{sec4} illustrates the proposed methods by analysing real data sets. Finally, Section \ref{conclusion} concludes with a summary.

\section{ Test statistics}\label{sec2}
\noindent A random variable $X$ is said to have a standard Cauchy distribution, denoted by $X\sim \mathcal{C}(0,1)$, if it has a probability density function (p.d.f.)
\begin{equation}\label{pdf}
    f(x)= \dfrac{1}{\pi(1+x^2)}, ~~ x\in \mathbb{R},
\end{equation}
and cumulative distribution function (c.d.f.) 
\begin{equation}\label{cdf}
    F(x)= \dfrac{1}{2}+\dfrac{1}{\pi}tan^{-1}(x),  ~~ x\in \mathbb{R}.  
\end{equation}
We are interested in testing the null hypothesis as follows
\begin{align}\label{ho}
     H_0 &: \text{ $X \in \mathrm{C}(0,1)$} \quad \text{against} \quad H_1  : \text{$X \notin \mathrm{C}(0,1)$}.
\end{align}

A new class of tests is introduced, derived from the following characterization of the Cauchy distribution.

\begin{theorem}(\cite{arnold1979some})\label{thm1}
 Let $X, X_1$ and $X_2$ be independent and identically distributed (i.i.d.) absolutely continuous random variables. Let us define $Y=(X_1- X_2^{-1})/2$. We recall $X$ has a distribution function (d.f.) $F$ and let $Y$ have a d.f. $G$. Then, $X$ and $Y=(X_1- X_2^{-1})/2$ have the same distribution if, and only if, $X \sim \mathcal{C}(0,1)$. 
\end{theorem}
 This characterization is classified within the category of distributional characterizations based on equality. Constructing tests based on such characterizations has become popular in goodness-of-fit testing because they ensure equivalence across distribution functions, density functions, characteristic functions, and related analytical properties.

Consider a random sample $X_1, X_2, \ldots, X_n$ of size $n$ from an unknown distribution function ($d.f.$) $F$.
 To test the hypothesis defined in (\ref{ho}), a measure of departure, denoted as \( \Delta \), is defined as follows:
 \begin{align*}
    \Delta&=\int_{-\infty}^\infty \bigg(P\bigg(\dfrac{X_1-X_2^{-1}}{2} \leq t\bigg)-P(X_3<t)\bigg)dF(t)\\
    &=\int_{-\infty}^\infty \bigg(P\bigg(\dfrac{X_1X_2-1}{2X_2} \leq t\bigg)-P(X_3<t)\bigg)dF(t)\\
   &= \int_{-\infty}^\infty \big(G(t)- F(t)\big)dF(t).
\end{align*}
The measure $\Delta$ represents the integral of the difference between two cumulative distribution functions over the entire real line. 
For \( X_1 \), \( X_2 \), and \( X_3 \), 
\begin{align*}
  \int_{-\infty}^\infty G(t) dF(t)&= \int_{-\infty}^\infty P\left(\dfrac{X_1 X_2 - 1}{2 X_2} \leq t \right) dF(t)\\
  &=  P\left(\dfrac{X_1 X_2 - 1}{2 X_2} \leq X_3 \right) \\
  &= E\left(\mathrm{I}\left\{ \dfrac{X_1 X_2 - 1}{2 X_2}  \leq X_3 \right\}\right), 
\end{align*}
and if \( F \) is a continuous d.f. then
$\int_{-\infty}^\infty F(t) dF(t) = \frac{1}{2}.$
In view of the characterization, the following empirical $U$-Statistics-based test statistic is considered:
\begin{equation}\label{teststats} 
    \Delta_n = \int_{-\infty}^\infty (G_n(t)-F_n(t)) dF_n(t),
\end{equation}
 where
\begin{equation*}
    G_n(t) = \binom{n}{2}^{-1}\sum_{i=1}^n \sum_{j=1;j<i}^n \mathrm{I}\bigg\{  \dfrac{X_iX_j-1}{2X_j} \leq t \bigg\}~~ \text{and} ~~ F_n(t)= \dfrac{1}{n}\sum_{i=1}^n \mathrm{I}\{X_i\leq t \}
\end{equation*}
are $U$-empirical d.f's. 
Here, $G_n(t)$ represents an empirical distribution function based on pairs of observations, while \( F_n(t) \) is the empirical distribution function based on individual observations. These $U$-empirical d.f.'s are used to approximate the underlying distribution functions that appear in the definition of \( \Delta \). The difference between \( G_n(t) \) and \( F_n(t) \) quantifies how the data diverge from the hypothesized model, and the test statistic \( \Delta_n \) is constructed as their weighted integral over the entire range of \( t \). 

After integration, an asymptotic equivalence test statistic of $\Delta_n$ becomes
\begin{align}\label{Dn}
\widehat{\Delta}^*_n  &= \binom{n}{3}^{-1}\sum_{i=1}^n \sum_{j=1;j<i}^n\sum_{k=1;k<j}^n \mathrm{I}\bigg\{ \dfrac{X_iX_j-1}{2X_j} \leq X_k \bigg\} -\dfrac{1}{2}.
\end{align}
 The estimator \( \widehat{\Delta}^*_n \) is unbiased for \( \Delta \).
 The symmetric kernel of \( \widehat{\Delta}^*_n \) is given by 
\begin{align*}
    &h(X_1, X_2, X_3)\\
    &=\dfrac{1}{3}\bigg(\mathrm{I}\bigg( \dfrac{X_1X_2-1}{2X_2}\leq X_3\bigg)+\mathrm{I}\bigg( \dfrac{X_1X_3-1}{2X_3}\leq X_2\bigg) +\mathrm{I}\bigg( \dfrac{X_2X_3-1}{2X_3}\leq X_1\bigg) \bigg)-\dfrac{1}{2}.
\end{align*}
 Next, the asymptotic properties of the test statistics are studied.

\begin{theorem}\label{thm2}
    Let $X, X_1, X_2, X_3$ be independent and identically distributed random
variables with d.f. $F$. As $n\to \infty$, $\sqrt{n}(\widehat{\Delta}^*_n -\Delta)$ converges in distribution to a normal with mean zero and variance $9\sigma^2$, where $\sigma^2$ is given by 
\begin{equation}\label{sigma}
    \sigma^2= Var\big( K(X) \big)
\end{equation}
\begin{align*}
\ \ \mbox{with}\ \    K(x)& = \dfrac{2}{3} P\Bigg( \dfrac{xX_2-1}{2X_2}\leq X_3\Bigg) + \dfrac{1}{3} P\Bigg( \dfrac{X_2X_3-1}{2X_3}\leq x\Bigg).  
\end{align*}
\end{theorem}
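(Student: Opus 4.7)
The plan is to recognize $\widehat{\Delta}^*_n$ as a standard U-statistic of order $m=3$ with the symmetric, bounded kernel $h(X_1,X_2,X_3)$ and to apply Hoeffding's central limit theorem for U-statistics. Since $h$ is an average of indicator functions minus a constant, it is uniformly bounded, so the moment condition $E[h^2(X_1,X_2,X_3)\mid X_1]<\infty$ is automatic; the substantive content of the theorem lies in identifying the asymptotic variance in the claimed form $9\sigma^2$, which requires computing the first Hoeffding projection.

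The central computation is therefore
$$h_1(x) := E\bigl[h(X_1,X_2,X_3)\mid X_1=x\bigr]-\Delta.$$
I would evaluate $E[h(x,X_2,X_3)]$ term by term. The three indicator contributions, conditional on $X_1=x$, give respectively $P\bigl((xX_2-1)/(2X_2)\le X_3\bigr)$, $P\bigl((xX_3-1)/(2X_3)\le X_2\bigr)$, and $P\bigl((X_2X_3-1)/(2X_3)\le x\bigr)$. Since $X_2$ and $X_3$ are i.i.d., a simple relabeling shows that the first two probabilities coincide, collapsing the three-term kernel into the two-term expression
$$E[h(x,X_2,X_3)] = \tfrac{2}{3}P\!\left(\tfrac{xX_2-1}{2X_2}\le X_3\right) + \tfrac{1}{3}P\!\left(\tfrac{X_2X_3-1}{2X_3}\le x\right)-\tfrac{1}{2} = K(x)-\tfrac{1}{2},$$
which matches the definition of $K$ in the theorem. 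Consequently $h_1(x)=K(x)-1/2-\Delta$ and $\mathrm{Var}(h_1(X))=\mathrm{Var}(K(X))=\sigma^2$.

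With the projection identified, the result follows from the standard asymptotic theory of U-statistics. Assuming non-degeneracy ($\sigma^2>0$, which I would verify is generic under $F\neq\mathcal{C}(0,1)$ as well as at the null), Hoeffding's theorem for a U-statistic of degree $m=3$ gives
$$\sqrt{n}\bigl(\widehat{\Delta}^*_n-\Delta\bigr) \;\xrightarrow{d}\; N\bigl(0,\,m^2\,\mathrm{Var}(h_1(X))\bigr) \;=\; N\bigl(0,\,9\sigma^2\bigr),$$
which is exactly the claim.

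The main obstacle I anticipate is purely the bookkeeping in the conditional-expectation step: keeping track of which pair among $\{(X_2,X_3),(X_3,X_2)\}$ yields identical probabilities after using the i.i.d. structure, so that the three symmetric indicator terms collapse into the $\frac{2}{3}/\frac{1}{3}$ split that produces $K(x)$. Once this reduction is in place, the invocation of the U-statistics CLT and the identification of the variance constant $m^2=9$ are routine.
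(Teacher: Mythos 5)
Your proposal is correct and follows essentially the same route as the paper: both identify $\widehat{\Delta}^*_n$ as a degree-three U-statistic, compute the first projection $K(x)$ by conditioning the symmetrized kernel on $X_1=x$ and using the i.i.d.\ relabeling of $(X_2,X_3)$ to collapse the three indicator terms into the $\tfrac{2}{3}/\tfrac{1}{3}$ split, and then invoke the standard U-statistics central limit theorem to obtain the limiting variance $m^2\,\mathrm{Var}(K(X))=9\sigma^2$. The only cosmetic difference is that the paper first strips off the constant $-1/2$ and works with $\widehat{\Delta}'_n$, whereas you keep it inside the kernel; this has no effect on the variance or the argument.
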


\begin{proof} Let $\widehat{\Delta}^*_n= \widehat{\Delta}^{'}_{n}-\dfrac{1}{2}$, where 
    \begin{equation}
    \widehat{\Delta}^{'}_{n}= \binom{n}{3}^{-1}\sum_{i=1}^n \sum_{j=1;j<i}^n\sum_{k=1;k<j}^n \mathrm{I}\Big\{\dfrac{X_iX_j-1}{2X_j}\leq X_k \Big\}.
\end{equation}
The estimator $\widehat{\Delta}^{'}_n$ is an unbiased estimator of $\Delta^{'}$. 
 Hence the asymptotic distribution of $\sqrt{n}(\widehat{\Delta}^*_{n} -\Delta)$ and $\sqrt{n}(\widehat{\Delta}^{'}_{n} - {\Delta}^{'})$ are the same. 
 Note that $\widehat{\Delta}^{'}_{n}$ is a $U$-Statistic with kernel function $h^*(X_1, X_2, X_3)= \mathrm{I}\Big( \dfrac{X_1X_2-1}{2X_2}\leq X_3\Big)$ of degree three. The symmetric version of the kernel $h^*(\cdot)$ is given by 
\begin{align}\label{h1}
     h_1(X_1, X_2, X_3)  &=\dfrac{1}{3}\bigg(\mathrm{I}\bigg( \dfrac{X_1X_2-1}{2X_2}\leq X_3\bigg)+\mathrm{I}\bigg( \dfrac{X_1X_3-1}{2X_3}\leq X_2\bigg) +\mathrm{I}\bigg( \dfrac{X_2X_3-1}{2X_3}\leq X_1\bigg) \bigg).
\end{align}
Using the central limit theorem for $U$-Statistics (see \cite{lee2019u}, Theorem 1, Page 76), as $n \to \infty$, $\sqrt{n}(\widehat{\Delta}^{'}_{n} -\Delta^{'})$ converges to normal with mean zero and variance $9\sigma^2$, where $\sigma^2$ is the asymptotic variance of $\widehat{\Delta}^{'}_{n}$ and is given by 
\begin{equation*}
    \sigma^2= Var\big(E(h_1(X_1, X_2, X_3)|X_1)\big).
\end{equation*}
Using ($\ref{h1}$), we find the conditional expectation as
\begin{align*}
   K(x)&= E(h_1(X_1, X_2, X_3)|X_1=x) = \dfrac{2}{3} P\Bigg( \dfrac{xX_2-1}{2X_2}\leq X_3\Bigg) + \dfrac{1}{3} P\Bigg( \dfrac{X_2X_3-1}{2X_3}\leq x\Bigg).  
\end{align*}
Hence, we obtain the variance expression as specified in the theorem. 
\end{proof}

 The following result shows that the asymptotic distribution of the null distribution follows a normal distribution. Note that under $H_0$, $\Delta =0$. 
\begin{corollary}\label{cor1}
    Let $X$ be a random variable with the d.f. specified in (\ref{cdf}). As $n\to \infty$, $\sqrt{n}\widehat{\Delta}^*_{n}$ converges in distribution to a normal with mean zero and variance $9\sigma^2_0$, where $\sigma^2_0$ is given by
    \begin{equation}\label{sigma_0}
        \sigma^2_0=    Var\big(K^*(X)\big),
    \end{equation}
    \begin{align*}
  \ \  \mbox{where} \ \  K^*(x)&= \dfrac{2}{3} P\Bigg( \dfrac{xX_2-1}{2X_2}\leq X_3\Bigg) + \dfrac{1}{3} P\big( X_1\leq x\big). 
    \end{align*}
\begin{proof}
    Under $H_0$, using the characterization of the Cauchy distribution given in Theorem \ref{thm1}, we obtain 
\begin{equation*}
    P\bigg( \dfrac{X_1X_2-1}{2X_2}\leq x \bigg)=P(X_1\leq x).
\end{equation*}
Now, we know that $\sigma^2= Var(E(h(X_1, X_2, X_3)|X_1))$.
Hence, from (\ref{sigma}), we have 
\begin{align*}
   K^*(x)=& E(h_1(X_1, X_2, X_3)|X_1=x) = \dfrac{2}{3} P\Bigg( \dfrac{xX_2-1}{2X_2}\leq X_3\Bigg) + \dfrac{1}{3} P\big( X_1\leq x\big).  
\end{align*}
    from where follows (\ref{sigma_0}).
\end{proof}
\end{corollary}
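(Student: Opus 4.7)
The plan is to derive Corollary \ref{cor1} as a direct specialization of Theorem \ref{thm2} to the null model, invoking the Cauchy characterization from Theorem \ref{thm1} to simplify one of the probabilities in the influence function.

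First I would note that under $H_0$, the measure of departure satisfies $\Delta = 0$ because $G \equiv F$ by the characterization in Theorem \ref{thm1}, so $\int (G - F)\,dF = 0$. Consequently $\sqrt{n}\widehat{\Delta}^*_n = \sqrt{n}(\widehat{\Delta}^*_n - \Delta)$ under $H_0$, and Theorem \ref{thm2} immediately delivers asymptotic normality with mean zero and variance $9\sigma^2$, provided the moment condition $E[h^2(X_1,X_2,X_3)\mid X_1]<\infty$ holds. This condition is automatic here because $h$ is a bounded kernel (a sum of indicators minus a constant), so no separate verification is needed.

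Next I would specialize the variance expression. From Theorem \ref{thm2},
\begin{equation*}
K(x) = \tfrac{2}{3}\,P\!\left(\tfrac{xX_2-1}{2X_2}\leq X_3\right) + \tfrac{1}{3}\,P\!\left(\tfrac{X_2X_3-1}{2X_3}\leq x\right).
\end{equation*}
The second probability is the c.d.f.\ of $Y = (X_2 X_3 - 1)/(2X_3)$ (with a relabelling of indices) evaluated at $x$. Under $H_0$, the characterization in Theorem \ref{thm1} gives $Y \stackrel{d}{=} X_1$, so
\begin{equation*}
P\!\left(\tfrac{X_2X_3-1}{2X_3}\leq x\right) = P(X_1 \leq x).
\end{equation*}
Substituting this identity into $K(x)$ yields exactly the expression $K^*(x)$ in the statement, and setting $\sigma_0^2 = \operatorname{Var}(K^*(X))$ completes the identification of the limiting variance as $9\sigma_0^2$.

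There is no real obstacle here; the only subtle point is the cosmetic one that the second term in $K(x)$ involves the transformed variable in two \emph{different} components from the first term, but since $X_2,X_3$ are i.i.d.\ copies of $X$ this is just a change of dummy indices and the characterization applies verbatim. The conclusion then follows by combining the mean-zero normal limit from Theorem \ref{thm2} with the simplified variance.
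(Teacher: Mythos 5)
Your proposal is correct and follows essentially the same route as the paper: specialize Theorem \ref{thm2} to the null (where $\Delta=0$) and use the characterization in Theorem \ref{thm1} to replace the second probability in $K(x)$, namely $P\bigl((X_2X_3-1)/(2X_3)\leq x\bigr)$, by $P(X_1\leq x)$, yielding $K^*$ and hence $\sigma_0^2$. Your added remarks that the moment condition holds automatically because the kernel is bounded, and that the index relabelling is immaterial since the $X_i$ are i.i.d., are correct refinements of the same argument.
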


Thus, given Corollary \ref{cor1}, we obtain a test based on normal approximation, and we reject the null hypothesis $H_0$ against the alternative hypothesis $H_1$ if 
\begin{equation*}
    \dfrac{\sqrt{n}|\widehat{\Delta}^*_{n}|}{\widehat{\sigma}_0}\geq Z_{\alpha/2},
\end{equation*}
where $\widehat{\sigma}_0$ is a consistent estimator of $\sigma_0$ and $Z_\alpha$ is the upper $\alpha$-percentile point of the standard normal distribution. As a result, implementing the test using normal approximation is challenging due to the difficulty in finding a consistent estimator of \(\sigma^2_0\). This challenge motivates the development of a nonparametric empirical likelihood-based test. Next, we discuss the JEL and AJEL-based tests for testing the Cauchy distribution.
 
\subsection{JEL ratio test}\label{JEL}
\noindent In this subsection, we construct a JEL test based on jackknife pseudo values that is constructed using (\ref{Dn}). The jackknife pseudo-values are given by 
\begin{equation}\label{pseudo}
    \widehat{J}_i= n\widehat{\Delta}^*_{n} - (n-1)\widehat{\Delta}_{n-1}^{*(-i)},~~~ i=1,2,\ldots,n,
\end{equation}
where the value $\widehat{\Delta}_{n-1}^{*(-i)}$ is computed using $ \widehat{\Delta}^*_{n}$ based on the $(n-1)$ observations $X_1, \ldots, X_{i-1}$, $X_{i+1}, \ldots, X_n$. Hence, it can be easily shown that $\widehat{\Delta}^*_{n}= \dfrac{1}{n}\sum\limits_{i=1}^n\widehat{J}_i$. Although  $\widehat{J}_i$ are typically dependent random variables, they are asymptotically independent under mild conditions (see \cite{shi1984approximate}). This allows us to use the empirical likelihood method using the Jackknife pseudo-value. 
Let $p_i$ be the probability assigned to each $\widehat{J}_i$. 
Hence, the JEL based on $\Delta$ is given by
\begin{equation*}
    \mathcal{L}(\Delta)= \max \bigg\{ \prod_{i=1}^n p_i: \sum_{i=1}^n p_i=1, \sum_{i=1}^n p_i\widehat{J}_i=0 \bigg\}.
\end{equation*}
We know that $\prod_{i=1}^np_i$ subject to $\sum_{i=1}^np_i=1$ attains its maximum value $\dfrac{1}{n^n}$ when $p_i= \dfrac{1}{n}$. Hence, the JEL ratio for $\Delta$ is given by
\begin{equation*}
    \mathcal{R}(\Delta)= \dfrac{\mathcal{L}(\Delta)}{n^{-n}}= \max \bigg\{ \prod_{i=1}^n np_i: \sum_{i=1}^n p_i=1, \sum_{i=1}^n p_i\widehat{J}_i=0 \bigg\}.
\end{equation*}
By using the standard Lagrange multiplier method, the above maximization is attained at 
\begin{equation*}
    \widehat{p}_i= \dfrac{1}{n}\dfrac{1}{(1+\widehat{\lambda}_1 \widehat{J}_i)}
\end{equation*}
where $\widehat{\lambda}_1$ satisfies 
\begin{equation*}
    \dfrac{1}{n}\sum_{i=1}^n \dfrac{\widehat{J}_i}{(1+\lambda_1 \widehat{J}_i)}=0.
\end{equation*}
Hence, we obtain the jackknife empirical log-likelihood ratio as
\begin{equation*}
    \log\mathcal{R}(\Delta)=- \sum_{i=1}^n \log \big(1+\widehat{\lambda}_1\widehat{J}_i \big).
\end{equation*}
Next, the asymptotic null distribution of the JEL ratio test is determined to construct a critical region for the JEL-based test. Following Theorem $2$ of \cite{jing2009jackknife}, the well-known Wilks' theorem regarding the asymptotic null distribution of the likelihood ratio statistic is applied.
\begin{theorem}\label{jelthm}
    Assume that
    \begin{enumerate}
        \item $E[h^2(X_1,X_2,X_3)]<\infty$, and
        \item $\sigma_1^2= Var[E\{h(X_1, X_2, X_3)|X_1\}]>0$.
    \end{enumerate}
    Then, as \( n \to \infty \), \( -2 \log \mathcal{R}(\Delta) \) converges in distribution to a chi-squared distribution with one degree of freedom.
\end{theorem}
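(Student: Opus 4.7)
The plan is to verify the hypotheses of Theorem 2 of \cite{jing2009jackknife} for the jackknife pseudo-values $\widehat{J}_1,\ldots,\widehat{J}_n$ defined in (\ref{pseudo}) and then invoke that theorem directly. Since $\widehat{\Delta}^*_n$ is a U-statistic of degree three with symmetric, bounded kernel $h$ (it is a difference of indicator averages and a constant), the square integrability assumption $E[h^2(X_1,X_2,X_3)]<\infty$ is automatic. The non-degeneracy assumption $\sigma_1^2 = \mathrm{Var}[E(h(X_1,X_2,X_3)\mid X_1)] > 0$ coincides with the quantity $\sigma^2$ identified in Theorem \ref{thm2}; under $H_0$ this equals the $\sigma_0^2$ of Corollary \ref{cor1}, which one checks is strictly positive since $K^*(X)$ is a non-constant function of a continuous Cauchy variable.

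Given these two conditions, I would invoke the Jing--Yuan--Zhou framework as follows. First, represent $\widehat{\Delta}^*_n = \tfrac{1}{n}\sum_{i=1}^n \widehat{J}_i$, which is immediate from the defining identity of the pseudo-values. Second, use the Hoeffding decomposition of the U-statistic together with the jackknife bias argument of \cite{shi1984approximate} to show that the pseudo-values are asymptotically i.i.d.\ with common mean $\Delta$ and asymptotic variance equal to $9\sigma_1^2$, and that the sample variance $S_n^2 = \tfrac{1}{n-1}\sum_{i=1}^n(\widehat{J}_i-\widehat{\Delta}_n^*)^2$ is consistent for $9\sigma_1^2$. Third, derive the order bound $\widehat{\lambda}_1 = O_p(n^{-1/2})$ and $\max_i |\widehat{J}_i| = o_p(n^{1/2})$ via the standard empirical-likelihood arguments (Owen-type bounds applied to the pseudo-values).

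With these pieces in place, the Taylor expansion
\begin{equation*}
-2\log\mathcal{R}(\Delta) = 2\sum_{i=1}^n \log(1+\widehat{\lambda}_1 \widehat{J}_i) = 2\widehat{\lambda}_1 \sum_i \widehat{J}_i - \widehat{\lambda}_1^2 \sum_i \widehat{J}_i^2 + o_p(1)
\end{equation*}
combined with the estimating equation $\sum_i \widehat{J}_i/(1+\widehat{\lambda}_1 \widehat{J}_i)=0$ yields
\begin{equation*}
-2\log \mathcal{R}(\Delta) = \frac{n(\widehat{\Delta}_n^* - \Delta)^2}{S_n^2/n \cdot n} \cdot n + o_p(1) = \frac{\{\sqrt{n}(\widehat{\Delta}_n^* - \Delta)\}^2}{S_n^2} + o_p(1).
\end{equation*}
By Theorem \ref{thm2} the numerator converges in distribution to $9\sigma_1^2 \chi_1^2$, and by the consistency of $S_n^2$ Slutsky's theorem gives the $\chi_1^2$ limit.

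The main obstacle will be the middle step: justifying that the jackknife pseudo-values constructed from the degree-three U-statistic $\widehat{\Delta}_n^*$ satisfy the asymptotic independence/variance-consistency conditions needed by \cite{jing2009jackknife}. This is not hard conceptually but requires care because the $\widehat{J}_i$ are genuinely dependent; one must appeal to the Hoeffding projection to show that $\widehat{J}_i = 3\,g_1(X_i) + \Delta + o_p(1)$ uniformly in $i$, where $g_1(x) = E(h(x,X_2,X_3)) - \Delta$, so that the leading term is a sum of independent variables with variance $9\sigma_1^2$. Once this representation is established, the remaining empirical-likelihood calculus is standard and the $\chi_1^2$ limit follows by Wilks' theorem applied to the pseudo-sample.
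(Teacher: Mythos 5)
Your proposal is correct and follows essentially the same route as the paper, which proves this result simply by invoking Theorem 2 of \cite{jing2009jackknife} after noting that the bounded kernel $h$ trivially satisfies the moment condition and that the pseudo-values average to $\widehat{\Delta}^*_n$; you supply the details (Hoeffding projection $\widehat{J}_i \approx \Delta + 3g_1(X_i)$, consistency of $S_n^2$ for $9\sigma_1^2$, and the Owen-type expansion) that the paper leaves to the cited reference. The only blemish is the garbled intermediate display $n(\widehat{\Delta}_n^*-\Delta)^2/(S_n^2/n\cdot n)\cdot n$, but your final expression $\{\sqrt{n}(\widehat{\Delta}_n^*-\Delta)\}^2/S_n^2+o_p(1)$ and the resulting $\chi^2_1$ limit are exactly right.
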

In view of Theorem \ref{jelthm}, the null hypothesis \( H_0 \) is rejected in favor of the alternative hypothesis \( H_1 \) at the \(\alpha\) level of significance, if 
\begin{equation*}
    -2\log\mathcal{R}(\Delta)>\chi^2_{1,\alpha},
\end{equation*}
where $\chi^2_{1,\alpha}$ is the upper $\alpha$-percentile point of a $\chi^2$ distribution with one degree of freedom. 

\subsection{AJEL ratio test}\label{AJEL}
\noindent The convex hull of $\widehat{J}_i-E(\widehat{J}_i)$ may not contain zero when the sample size is very small. To overcome this difficulty, \cite{chen2016adjusted} combined the idea of jackknife and adjusted jackknife empirical likelihood and proposed an adjusted jackknife empirical likelihood (AJEL) ratio test. In the AJEL method, \cite{chen2016adjusted} proposed incorporating an additional jackknife pseudo-value alongside the existing jackknife pseudo-values given in (\ref{pseudo}) and defined as
\begin{equation*}
    \widehat{J}_{n+1}=\dfrac{-k_n}{n}\sum_{i=1}^n\widehat{J}_i. 
\end{equation*}
In the context of AJEL, \cite{chen2008adjusted} suggest the most popular choice of $k_n$ as $\max\{1, \log(n)/2\}$. Hence, we consider the same value for $k_n$.
The AJEL is defined as follows,
\begin{equation*}
    \mathcal{L}(\Delta)= \max\bigg\{\prod_{i=1}^{n+1} p_i: \sum_{i=1}^{n+1} p_i=1, p_i\geq 0, \sum_{i=1}^{n+1}p_i \widehat{W}_i=0 \bigg\},
\end{equation*}
where 
\begin{equation}\label{wi}
    \widehat{W}_i= \begin{cases}
\widehat{J}_i  \hspace{5.7cm}  \text{for}  ~ i=1,2,\ldots,n. \\
-\max\Big(1,\dfrac{1}{2}\log(n)\Big) \dfrac{1}{n} \sum_{i=1}^n \widehat{J}_i \hspace{0.7cm} ~~~ \text{for} ~ i=n+1.
\end{cases}
\end{equation}
Hence, the AJEL ratio for $\Delta$ is given by
\begin{equation*}
    \mathcal{R}(\Delta)= \max\bigg\{\prod_{i=1}^{n+1} (n+1)p_i: \sum_{i=1}^{n+1} p_i=1, p_i\geq 0, \sum_{i=1}^{n+1}p_i \widehat{W}_i=0 \bigg\}.
\end{equation*}
Thus, the next theorem establishes that the asymptotic distribution of the empirical likelihood ratio test statistic follows a chi-squared distribution, analogous to Wilks' theorem.
\begin{theorem}\label{ajelth}
Under the assumptions outlined in Theorem \ref{jelthm}, under $H_0$, $-2\log\mathcal{R}(\Delta)$ converges in distribution to $\chi^2$ with one degree of freedom, where the log empirical likelihood ratio is obtained as
    \begin{equation*}
        \log \mathcal{R}(\Delta)= -\sum_{i=1}^{n+1} \log \Big(1+\widehat{\lambda}_2 \widehat{W}_i \Big)
    \end{equation*}
where $\widehat{\lambda}_2$ satisfies
\begin{equation*}
    \dfrac{1}{n+1}\sum_{i=1}^{n+1} \dfrac{\widehat{W}_i}{1+\lambda_2 \widehat{W}_i}=0, 
\ \ \mbox{and} \ \ \
    \widehat{p}_i= \dfrac{1}{(n+1)}\dfrac{1}{(1+\widehat{\lambda}_2 \widehat{W}_i)}.
\end{equation*}
\end{theorem}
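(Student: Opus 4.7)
The plan is to leverage Theorem~\ref{jelthm} directly by showing that the single augmented pseudo--value $\widehat{W}_{n+1}$ is asymptotically negligible, so the AJEL log--ratio is equal, up to $o_p(1)$ terms, to the JEL log--ratio already known to converge to $\chi^2_1$. First I would establish the order of the augmentation term: under $H_0$, Theorem~\ref{thm2} (equivalently, Corollary~\ref{cor1}) gives $\sqrt{n}\,\widehat{\Delta}^*_n = O_p(1)$, and since $\widehat{\Delta}^*_n = n^{-1}\sum_{i=1}^n \widehat{J}_i$, this yields $n^{-1}\sum_{i=1}^n \widehat{J}_i = O_p(n^{-1/2})$. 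With $k_n = \max\{1,\tfrac{1}{2}\log n\}$, it follows that $\widehat{W}_{n+1} = O_p(k_n n^{-1/2}) = o_p(1)$, while also $\widehat{W}_{n+1}^2 = o_p(1)$ so it contributes a vanishing fraction to $\sum_{i=1}^{n+1}\widehat{W}_i^2$ relative to $\sum_{i=1}^n \widehat{J}_i^2$, which by the law of large numbers for jackknife pseudo--values converges to $9\sigma_0^2 > 0$.

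Next I would control the Lagrange multiplier $\widehat{\lambda}_2$. Following the standard empirical likelihood argument (as in \cite{jing2009jackknife} and \cite{chen2016adjusted}), the estimating equation
\begin{equation*}
    \frac{1}{n+1}\sum_{i=1}^{n+1} \frac{\widehat{W}_i}{1+\lambda_2\widehat{W}_i} = 0
\end{equation*}
together with the bound $\max_{1\leq i\leq n+1}|\widehat{W}_i| = o_p(n^{1/2})$ (inherited from the JEL analysis for $\widehat{J}_i$, and trivially for $\widehat{W}_{n+1}$) yields $\widehat{\lambda}_2 = O_p(n^{-1/2})$. Expanding this equation and solving gives
\begin{equation*}
    \widehat{\lambda}_2 = \frac{\sum_{i=1}^{n+1}\widehat{W}_i}{\sum_{i=1}^{n+1}\widehat{W}_i^2} + o_p(n^{-1/2}).
\end{equation*}
Because $\sum_{i=1}^{n+1}\widehat{W}_i = (1 - k_n/n)\sum_{i=1}^n \widehat{J}_i$ and $k_n/n \to 0$, the numerator is asymptotically equivalent to $\sum_{i=1}^n \widehat{J}_i$, and the denominator is asymptotically equivalent to $\sum_{i=1}^n \widehat{J}_i^2$, matching the JEL quantities.

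Finally, a second--order Taylor expansion of $\log(1+\widehat{\lambda}_2 \widehat{W}_i)$ (valid because $\max_i |\widehat{\lambda}_2\widehat{W}_i| = o_p(1)$) gives
\begin{equation*}
    -2\log\mathcal{R}(\Delta) = \frac{\bigl(\sum_{i=1}^{n+1}\widehat{W}_i\bigr)^2}{\sum_{i=1}^{n+1}\widehat{W}_i^2} + o_p(1),
\end{equation*}
and substituting the equivalences from the previous step shows this is equal, up to $o_p(1)$, to the JEL test statistic
\begin{equation*}
    \frac{\bigl(\sum_{i=1}^{n}\widehat{J}_i\bigr)^2}{\sum_{i=1}^{n}\widehat{J}_i^2},
\end{equation*}
which by Theorem~\ref{jelthm} and Slutsky's theorem converges in distribution to $\chi^2_1$.

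\paragraph{Main obstacle.}
The only delicate step is the uniform control $\max_{1\leq i\leq n}|\widehat{J}_i| = o_p(\sqrt{n})$ required to justify both the Taylor expansion and the bound on $\widehat{\lambda}_2$. This has to be argued from $E[h^2(X_1,X_2,X_3)]<\infty$ via the representation of $\widehat{J}_i$ in terms of the Hoeffding projection of the U--statistic, essentially replicating the argument in \cite{jing2009jackknife}; once this is in place, the negligibility of the single extra term $\widehat{W}_{n+1}$ is routine and the chi--squared limit follows from the JEL result without further input.
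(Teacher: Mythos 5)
Your proposal is correct and follows essentially the same route as the paper's proof: show the single augmented pseudo-value $\widehat{W}_{n+1}=O_p(k_n n^{-1/2})$ is negligible, bound $\widehat{\lambda}_2=O_p(n^{-1/2})$, Taylor-expand the log-ratio, and reduce $-2\log\mathcal{R}(\Delta)$ to $nZ_n^2/S^2+o_p(1)$ with $Z_n=n^{-1}\sum_{i=1}^n\widehat{J}_i$ and $S^2=n^{-1}\sum_{i=1}^n\widehat{J}_i^2$, which converges to $\chi^2_1$ by the JEL analysis. If anything, your write-up is the more careful of the two, since you make explicit the identity $\sum_{i=1}^{n+1}\widehat{W}_i=(1-k_n/n)\sum_{i=1}^n\widehat{J}_i$ and the uniform bound $\max_i|\widehat{J}_i|=o_p(\sqrt{n})$ that the paper's sketch leaves implicit.
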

\begin{proof}
    Using the jackknife pseudo value defined in (\ref{wi}), we obtain $Z_n=\dfrac{1}{n}\sum_{i=1}^n \widehat{J}_i$. Define $S^2=\dfrac{1}{n}\sum_{i=1}^n (\widehat{J}_i)^2$, hence by the strong law of large numbers, we have $S^2= \sigma^2+ O_p(1)$. We have $|\widehat{\lambda}_2|=\mathcal{O}_p(1/\sqrt{n})$, when $k_n=O_p(n)$. 
Consider 
\begin{align*}
    -2 \log\mathcal{R}(\Delta)&= \sum_{i=1}^{n+1} 2\log (1 +\widehat{\lambda}_2 \widehat{W}_i)\\
    &= 2\sum_{i=1}^{n+1} \big( \widehat{\lambda}_2 \widehat{W}_i - (\widehat{\lambda}_2 \widehat{W}_i)^2/2 \big) +O_p(1)\\
    &= 2n\widehat{\lambda}_2 Z_n -nS\widehat{\lambda}_2^2+ O_p(1)\\
    &= \dfrac{n Z_n^2}{S^2}+ O_p(1),
\end{align*}
where the preceding identity is established because the $(n + 1)$th term in the summation is $k_nO_p(n^{-3/2}) = O_p(n) \mathcal{O}_p(n^{-3/2}) =O_p(1)$. Hence by Slutsky's theorem, as $n\to \infty$,
\begin{equation*}
    -2 \log\mathcal{R}(\Delta)=\dfrac{n Z_n^2}{S^2} \xrightarrow{d} \chi^2_1. 
\end{equation*} 
\end{proof}
Thus in view of Theorem \ref{ajelth}, we reject the null hypothesis $H_0$ against the alternative hypothesis $H_1$ at a $\alpha$ level of significance if
\begin{equation*}
     -2 \log\mathcal{R}(\Delta)> \chi^2_{1, \alpha}. 
\end{equation*}

\section{Simulation study}\label{SStudy}
\noindent In this section, a comprehensive simulation study is presented to evaluate the empirical power of the new tests relative to established methods. All computations for this and the subsequent sections were carried out using the statistical software R. The simulations were conducted at a significance level \( \alpha = 0.05 \) and for sample sizes \( n = 20, 40, 60, 80, \) and \( 100 \).
To evaluate the empirical power of the newly proposed tests in comparison with the following established tests for the Cauchy distribution:
\begin{itemize}
    \item The classical Kolmogorov-Smirnov $(KS_n)$ test:
    \begin{equation*}
        KS_n= \max\Big[\max \limits_{1\leq i \leq n} \Big\{\dfrac{i}{n}-F(x), F(x)- \dfrac{i-1}{n} \Big\}\Big],
    \end{equation*}
    where $F(x)$ is the  c.d.f. defined in $(\ref{cdf})$.  
    \item Cram\'{e}r-von Mises $(CM_n)$ test: 
     \begin{equation*}
         CM_n= \sum_{i=1}^n \Big( F(X_{(i)}-\dfrac{i-0.5}{n} \Big)^2 +\dfrac{1}{12n}.
     \end{equation*}
    \item Modified Anderson-Darling $(MA_n)$ test:
    \begin{equation*}
         MA_n= \dfrac{-2}{n} \sum_{i=1}^n \Big[(i-0.5)\log \{F(X_{(i)})\} + (n-i+0.5)\log \{1-F(X_{(i)})\}   \Big]-n,
     \end{equation*}
   where $X_{(\cdot)}$ denotes the order statistics.
    \item Likelihood ratio based, \cite{zhang2002powerful} tests:
    \begin{align*}
        ZA_n&= -\sum_{i=1}^n \bigg[\dfrac{\log\{F(X_{(i)}) \}}{n-i+0.5}+ \dfrac{\log\{1-F(X_{(i)}) \}}{i-0.5}  \bigg],\\
        ZB_n&=\sum_{i=1}^n \bigg[\log \Big\{ \dfrac{1/F(X_{(i)})-1}{(n-0.5)/(i-0.75)-1} \Big\} \bigg]^2,\\
        ZC_n&= \max \limits_{1\leq i \leq n} \Big[ (i-0.5) \log \Big\{\dfrac{i-0.5}{nF(X_{(i)})} \Big\} \\
        &\hspace{1.5cm}+ (n-i+0.5) \log \Big\{\dfrac{n-i+0.5}{n(1-F(X_{(i)}))} \Big\} \Big].
    \end{align*}
    
    \item \cite{gurtler2000goodness} empirical characteristic function based test:
  \begin{equation*}
      D_{n,\lambda}= \dfrac{2}{n}\sum_{i,j=1}^n \dfrac{\lambda}{\lambda^2+(X_i-X_j)^2}- 4\sum_{i=1}^n \dfrac{1+\lambda}{(1+\lambda)^2+X_i^2}+\dfrac{2n}{2+\lambda},
  \end{equation*}
    where $\lambda>0$ be the weighting parameter. Based on the \cite{gurtler2000goodness} simulation results, using  $\lambda= 5$  proves to be effective in practice, leading to a robust test.
    
    \item \cite{kullback1997information} distance $(KL_n)$ test:
   The Kullback-Leibler (KL) distance is the most widely used information criterion for evaluating model discrepancies. \cite{mahdizadeh2017new} suggested a test statistic 
    \begin{equation*}
        KL_n= \exp \bigg\{-HV_{n,m}-\dfrac{1}{n} \sum_{i=1}^n\log(f(X_{i})) \bigg\},
    \end{equation*}
where $m= n/2$ is the optimal window size mentioned in \cite{mahdizadeh2019goodness}, $f(x)$ be the p.d.f defined in $(\ref{pdf})$ and 
\begin{equation*}
    HV_{n,m}= \dfrac{1}{n}\sum_{i=1}^n\log \Big\{\dfrac{n}{2m}\big( X_{(i+m)}-X_{(i-m)}\big) \Big\}. 
\end{equation*}
    
    \item Entropy estimator based test:  Recently  \cite{mahdizadeh2019goodness} proposed six tests based on entropy measure. In our simulation study, we consider the test which gives higher power than other
    tests. Hence we consider the test given by
\begin{equation*}
    HE_{n,m} =\dfrac{1}{n} \sum_{i=1}^n \log \Big\{\dfrac{n}{c_im}\big( Y_{(i+m)}-Y_{(i-m)}\big)\Big\}, 
\end{equation*}
    where 
    \begin{equation*}
        c_i=  \begin{cases}
        1+\dfrac{1+i}{m}-\dfrac{i}{m^2} \hspace{1.15cm} ~~ ~ 1\leq i \leq m, \\
        2  \hspace{3.1cm} ~ m+1\leq i \leq n-m,\\ 
        1+\dfrac{n-i}{m+1} \hspace{1.5cm} ~ n-m+1\leq i \leq n
    \end{cases}
    \end{equation*}
  and the ${Y_{(i)}}'s $ are
   \begin{equation*}
       \begin{cases}
       Y_{(i-m)}= p +\dfrac{i-1}{m}\big(X_{(1)}-p \big) \hspace{1.15cm} ~~ ~ 1\leq i \leq m, \\
       Y_{(i)}= X_{(1)}     \hspace{3.9cm} ~ m+1\leq i \leq n-m,\\
       Y_{(i+m)}= q+\dfrac{n-i}{m}\big(q-X_{(n)}\big)    \hspace{0.7cm} ~ n-m+1\leq i \leq n
   \end{cases}
   \end{equation*}
\noindent where the constants $p$ and $q$ are to be determined such that $P(p\leq X\leq q)\approx 1$ and $m= n/2$ is the optimal window size.
\end{itemize}

First, we find an empirical type-I error rate. For this purpose, we generate observations from a standard Cauchy distribution. The empirical type-I error rates for the tests based on JEL and AJEL are computed using 10,000 replications. The proportion of times the null hypothesis is rejected at the \( 5\% \) significance level is assessed. The results, presented in Table \ref{table:sizetable}, show that the empirical type-I error rates converge to the specified significance level $\alpha$ as the sample size increases.
Next, the empirical power of the proposed JEL and AJEL tests is assessed by examining their performance against the alternative distributions considered below.
\begin{itemize}
    \item Student's t distribution with $r$ degrees of freedom: $t_r$ 
    \item Normal distribution with parameters $\mu$ and $\sigma^2$: $N(\mu, \sigma^2)$
    \item Gamma distribution with parameters $\alpha$ and $\beta$: $G(\alpha,\beta)$
    \item Laplace distribution with parameters $\mu$ and $\sigma^2$: $L(\mu, \sigma^2)$
    \item Beta distribution with parameters $\alpha$ and $\beta$: $B(\alpha,\beta)$
    \item Uniform distribution with parameters $\alpha$ and $\beta$: $U(\alpha,\beta)$
\end{itemize}
The alternative distributions considered are \( t_3 \), \( N(0,1) \), \( G(2,1) \), \( L(0,1) \), \( B(2,2) \), and \( U(0,1) \). Samples of sizes \( n = 20, 40, 60, 80 \), and \( 100 \) are generated from each of these distributions. The empirical power of the tests based on JEL and AJEL is calculated using 10000 replications with the proportion of times the null hypothesis is rejected at the $5\%$ significance level. The results are presented in Tables \ref{table:powertable1}. From Tables \ref{table:powertable1}, we observed that the proposed test has very good power, even for a relatively small sample size.

\begin{table}[!ht]
\centering
\caption{type-I error rates at $0.05$ level of significance}
\label{table:sizetable} 
\resizebox{14.5cm}{!}{
\fontsize{10pt}{13pt}\selectfont
\begin{tabular}{p{0.7cm} p{0.5cm} p{0.7cm} p{0.7cm} p{0.7cm} p{0.7cm} p{0.7cm}p{0.7cm} p{0.7cm} p{0.7cm} p{0.7cm} p{0.7cm} p{0.7cm} p{0.7cm} } 
\hline
        $n$& JEL & AJEL  & $KS_n$  & $CM_n$  &$MA_n$  & $ZA_n$   & $ZB_n$   &  $ZC_n$  & $D_{n,\lambda}$  & $KL_n$& $HE_{n,m}$  \\[0.3ex]
\hline
          20&0.044&0.030& 0.057& 0.048& 0.047& 0.048& 0.045& 0.039& 0.053& 0.035&0.035 \\[0.5ex]
          40&0.032&0.029& 0.044& 0.051& 0.046& 0.061& 0.060& 0.059 &0.045& 0.054&0.040\\[0.5ex]
          60&0.047&0.035& 0.069& 0.078&  0.073& 0.047& 0.047& 0.043& 0.058& 0.044&0.042\\[0.5ex]
          80&0.049&0.047& 0.067& 0.061&  0.062& 0.064& 0.039& 0.041& 0.055& 0.064&0.044\\[0.5ex]
         100&0.051&0.049& 0.078& 0.069 & 0.091& 0.058& 0.059& 0.068& 0.058& 0.056&0.038\\[0.5ex]
\hline
\end{tabular}
}
\end{table}

\begin{table}[!ht]
\centering
\caption{Empirical power compassion at $0.05$ level of significance}
\label{table:powertable1} 
\resizebox{14.5cm}{!}{
\fontsize{10pt}{13pt}\selectfont
\begin{tabular}{p{0.7cm} p{0.5cm} p{0.7cm} p{0.7cm} p{0.7cm} p{0.7cm} p{0.7cm}p{0.7cm} p{0.7cm} p{0.7cm} p{0.7cm} p{0.7cm} p{0.7cm} p{0.7cm} } 
\hline
      Alt.  &$n$& JEL & AJEL  & $KS_n$  & $CM_n$ &$MA_n$  & $ZA_n$   & $ZB_n$   &  $ZC_n$  & $D_{n,\lambda}$  & $KL_n$& $HE_{n,m} $  \\[0.3ex]
\hline
           $t_3$ & 20&0.553&0.482& 0.028& 0.035& 0.019& 0.006& 0.001& 0.018& 0.006 &0.099&0.043 \\[0.5ex]
             & 40&0.750&0.717& 0.022& 0.046&  0.002& 0.046& 0.019& 0.169& 0.033& 0.527&0.473\\[0.5ex]
             & 60&0.865&0.852& 0.013& 0.044&  0.007& 0.424& 0.198& 0.398& 0.246& 0.741&0.813\\[0.5ex]
             & 80&0.953&0.944& 0.014& 0.021&  0.006& 0.594& 0.445& 0.600& 0.384& 0.887&0.893\\[0.5ex]
            & 100&0.968&0.965& 0.016& 0.018&  0.019& 0.862& 0.705& 0.589& 0.562& 0.909&0.911\\[1.5ex]
      N(0,1) & 20&0.512&0.447& 0.002& 0.001&  0.003& 0.008& 0.008& 0.030& 0.005& 0.193&0.134\\[0.5ex]
       & 40&0.686&0.644& 0.001& 0.002&  0.003& 0.549& 0.299& 0.509 &0.398 &0.798&0.686\\[0.5ex]
       &  60&0.836&0.817& 0.001& 0.004&  0.027& 0.799 &0.853& 0.776& 0.841& 0.887&0.764\\[0.5ex]
        & 80&0.923&0.916& 0.005& 0.010&  0.039& 0.830& 0.910& 0.807& 0.908& 0.908&0.884\\[0.5ex]
       & 100&0.964&0.960& 0.002& 0.005 &0.076& 0.919& 0.948& 0.923& 0.938& 0.934&0.901\\[1.5ex]
               G(2,1) & 20&0.718&0.484& 0.010 &0.016& 0.015&  0.077& 0.021 &0.009& 0.023& 0.168&0.091\\[0.5ex]
        & 40 &1.000&1.000&0.054& 0.247& 0.124&  1.000 &0.535& 0.042& 0.055& 0.917&0.910\\[0.5ex]
        &  60 &1.000&1.000&0.193& 0.600& 0.383&  1.000& 0.960& 0.277& 0.322& 0.990&0.995\\[0.5ex]
         & 80&1.000&1.000& 0.341 &0.809& 0.592&  1.000& 0.999& 0.663& 0.456& 1.000&1.000\\[0.5ex]
         &100&1.000&1.000& 0.476& 0.955& 0.794&  1.000& 1.000& 0.878& 0.679& 1.000&1.000\\[1.5ex]
     L(0,1)&  20&0.540&0.477& 0.022& 0.011&  0.009& 0.005& 0.006& 0.009& 0.006& 0.066&0.023\\[0.5ex]
      &  40&0.742&0.720& 0.006& 0.002&  0.007& 0.051& 0.052& 0.052& 0.073& 0.681&0.402\\[0.5ex]
      &  60&0.885&0.871& 0.002& 0.002&  0.004& 0.336& 0.196& 0.346& 0.244& 0.729&0.836\\[0.5ex]
       & 80&0.932&0.924& 0.002& 0.004&  0.009& 0.800& 0.390& 0.781& 0.400& 0.887&0.902\\[0.5ex]
      & 100&0.974&0.970& 0.001& 0.002&  0.013& 0.942& 0.684& 0.879& 0.660& 0.948&0.967\\[1.5ex]
         B(2,2)&  20&1.000& 1.000& 0.020& 0.033&  0.050& 0.229& 0.170& 0.013& 0.013& 0.514&0.224\\[0.5ex]
         &40&1.000& 1.000& 0.149& 0.110& 0.216& 0.749& 0.971 &0.809& 0.194& 0.879&0.768\\[0.5ex]
         &  60&1.000& 1.000& 0.706& 0.668& 0.794& 0.939& 1.000& 0.990& 0.964& 0.959&0.961\\[0.5ex]
          & 80&1.000& 1.000& 1.000& 0.959&  0.991& 1.000& 1.000& 1.000& 1.000& 1.000&1.000\\[0.5ex]
         & 100&1.000& 1.000& 1.000& 0.997 & 1.000& 1.000& 1.000& 1.000& 1.000& 1.000&1.000\\[1.5ex]
      U(0,1)&  20&1.000&1.000& 0.006&  0.012& 0.014& 0.228& 0.132& 0.018& 0.006& 0.918&0.635\\[0.5ex]
      &  40&1.000&1.000& 0.809& 0.226&  0.246& 0.820& 0.910& 0.801& 0.615& 1.000&0.920\\[0.5ex]
      &  60&1.000&1.000& 1.000& 0.739&  0.882& 1.000& 1.000& 1.000& 0.998& 1.000&1.000\\[0.5ex]
      &  80&1.000&1.000& 1.000& 0.912&  0.962& 1.000& 1.000 &1.000& 1.000& 1.000&1.000\\[0.5ex]
      & 100&1.000&1.000& 1.000& 0.998 & 0.998& 1.000& 1.000& 1.000& 1.000& 1.000&1.000\\
\hline
\end{tabular}}
\end{table}

Compared to classical tests, the JEL and AJEL tests show better performance. Classical tests often struggle with small sample sizes or specific types of data, which can make them less reliable in some cases. However, the JEL and AJEL tests are more dependable and effective across a wide range of situations. They handle different sample sizes and various data distributions well, making them a versatile and robust choice for many different scenarios. In real-world situations, where conditions can vary greatly, the JEL and AJEL tests provide a more stable and dependable method for detecting significant differences.

\section{Data analysis}\label{sec4}
\noindent Next, we analyze two real-world data sets employing all the proposed methodologies.
\subsection*{I.German Stock Index Data}
\noindent Stock market return distributions frequently exhibit heavy tails, a feature not commonly associated with the normal distribution. Consequently, it is more appropriate to analyze stock market return data using Cauchy distributions (\cite{khan2024testing}), which are known for their ability to model data with heavy tails. In this study, we examine the returns of the closing prices of the German Stock Index using the Cauchy distribution. The DAX (Deutscher Aktienindex) is a stock market index that includes the $30$ largest blue-chip companies listed on the Frankfurt Stock Exchange, representing a significant segment of the German equity market.

In this study, we apply goodness-of-fit tests to a dataset consisting of $30$ returns of closing prices from the DAX (Deutscher Aktienindex). These data points were collected daily, starting from January $1, 1991$, while excluding weekends and public holidays. The dataset, rounded to seven decimal places, is provided in Table \ref{table:data1} and sourced from the \texttt{datasets} package in the R statistical software. Figure \ref{fig:data1}  displays the histogram with an overlaid Cauchy density function.
\begin{table}[!ht]
    \centering
    \caption{ Closing prices of DAX}
    \label{table:data1} 
    \begin{tabular}{c c c c c c }
    \hline
    0.0011848 & -0.0057591 & -0.0051393 & -0.0051781 & 0.0020043 & 0.0017787 \\
    0.0026787 & -0.0066238 & -0.0047866 & -0.0052497 & 0.0004985 & 0.0068006 \\
    0.0016206 & 0.0007411 & -0.0005060 & 0.0020992 & -0.0056005 & 0.0110844 \\
    -0.0009192 & 0.0019014 & -0.0042364 & 0.0146814 & -0.0002242 & 0.0024545 \\
    -0.0003083 & -0.0917876 & 0.0149552 & 0.0520705 & 0.0117482 & 0.0087458 \\
    \hline
    \end{tabular} 
\end{table}

The values of JEL test statistic yield $0.3206$ with a p-value $0.5711$ and for AJEL test statistics is $0.3078$ with a p-value $0.5613$, whereas all other statistics are computed and presented in Table $9$ of \cite{mahdizadeh2019goodness}. For each test performed, the null hypothesis that the data are drawn from a Cauchy distribution cannot be rejected at the $0.05$ significance level.

\begin{figure}[!ht]
    \centering
\includegraphics[width=8.5cm,height=8cm]{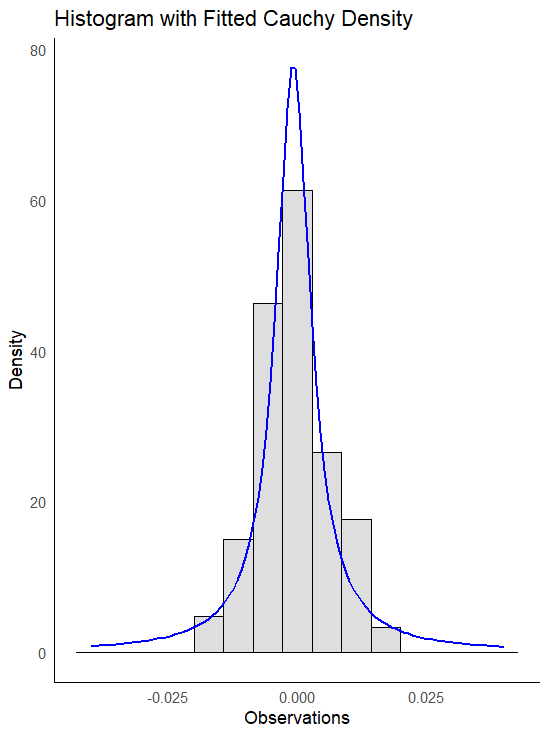}
    \caption{Q-Q plot and histogram for German Stock Index (DAX) Data}
    \label{fig:data1}
\end{figure}


\subsection*{II. Cryptocurrency Data}
\noindent Now, we utilize tests for the Cauchy distribution to analyze the log returns of EOS cryptocurrency.  Cauchy distribution is
found to be a comparably good model for such data sets,
see \cite{ebner2024cauchy}. The dataset analyzed in this study comprises daily closing prices of EOS in USD, spanning from December $25, 2020$, to July $07, 2024$. The data were accessible from CryptoDataDownload (\url{www.cryptodatadownload.com/data}). For the analysis, we utilize the daily returns $r_d=(P_d-P_{d-1})/P_{d-1}$ of the closing prices, where $P_d$ be the closing price of the $d$-th day. Figure \ref{fig:data2}  displays the histogram of the dataset along with the fitted Cauchy distribution densities. Visually, the Cauchy model appears to be a reasonable approximation. 
The calculated value of the JEL statistic is $0.2084$ with a corresponding p-value of $0.6479$, while the AJEL test statistic value is $0.1192$ with a p-value of $0.6464$. Based on JEL and AJEL values, we fail to reject the null hypothesis at the $0.05$ significance level.  

\begin{figure}[!ht]
    \centering
    \includegraphics[width=8.5cm,height=8cm]{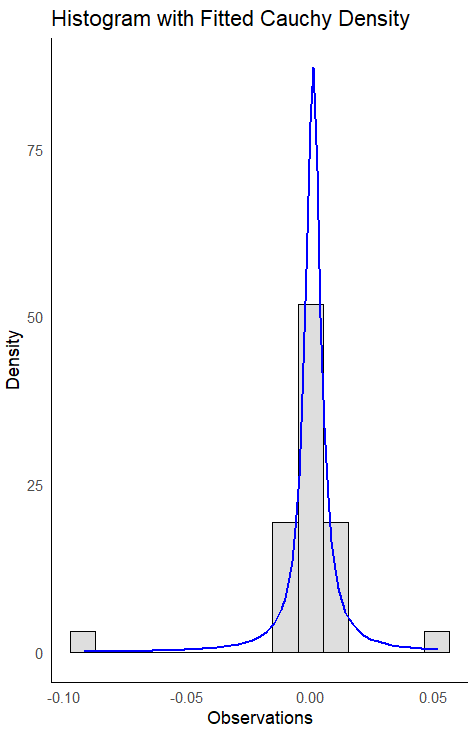}
    \caption{Q-Q plot and histogram for daily closing prices of Bitcoin data}
    \label{fig:data2}
\end{figure}

\section{Summary}\label{conclusion} 
\noindent In this paper, jackknife and adjusted jackknife empirical likelihood ratio tests are proposed, using $U$-Statistic theory and based on a straightforward characterization of the Cauchy distribution. Drawing inspiration from the work of \cite{jing2009jackknife}, the paper derives the asymptotic distribution of the empirical likelihood ratio test statistic based on the JEL method, analogously to Wilks' theorem. This result is then extended to the AJEL method. A simulation study is conducted to evaluate the efficacy of the proposed test procedures. Additionally, the methodologies are applied to real data sets, specifically the daily closing prices of the German stock index and the EOS cryptocurrency. The data analysis shows that both data sets exhibit heavy-tailed behaviour, suggesting that the Cauchy distribution is suitable for both data sets. 

\begin{thebibliography}{}
\bibitem[Arnold, 1979]{arnold1979some}
Arnold, B.~C. (1979).
\newblock Some characterizations of the {C}auchy distribution.
\newblock {\em Australian Journal of Statistics}, 21(2):166--169.

\bibitem[Bhati and Kattumannil, 2020]{bhati2020jackknife}
Bhati, D. and Kattumannil, S.~K. (2020).
\newblock Jackknife empirical likelihood test for testing one-sided l{\'e}vy distribution.
\newblock {\em Journal of Applied Statistics}, 47(7):1208--1219.

\bibitem[Chen et~al., 2008]{chen2008adjusted}
Chen, J., Variyath, A.~M., and Abraham, B. (2008).
\newblock Adjusted empirical likelihood and its properties.
\newblock {\em Journal of Computational and Graphical Statistics}, 17(2):426--443.

\bibitem[Chen and Ning, 2016]{chen2016adjusted}
Chen, Y.-J. and Ning, W. (2016).
\newblock Adjusted jackknife empirical likelihood.
\newblock {\em arXiv preprint arXiv:1603.04093}.

\bibitem[Ebner et~al., 2024]{ebner2024cauchy}
Ebner, B., Eid, L., and Klar, B. (2024).
\newblock Cauchy or not {C}auchy? new goodness-of-fit tests for the {C}auchy distribution.
\newblock {\em Statistical Papers}, 65(1):45--78.

\bibitem[Feng and Peng, 2012]{feng2012jackknife}
Feng, H. and Peng, L. (2012).
\newblock Jackknife empirical likelihood tests for error distributions in regression models.
\newblock {\em Journal of Multivariate Analysis}, 112:63--75.

\bibitem[G{\"u}rtler and Henze, 2000]{gurtler2000goodness}
G{\"u}rtler, N. and Henze, N. (2000).
\newblock Goodness-of-fit tests for the {C}auchy distribution based on the empirical characteristic function.
\newblock {\em Annals of the Institute of Statistical Mathematics}, 52(2):267--286.

\bibitem[Jing et~al., 2009]{jing2009jackknife}
Jing, B., Yuan, J., and Zhou, W. (2009).
\newblock Jackknife empirical likelihood.
\newblock {\em Journal of the American Statistical Association}, 104(487):1224--1232.

\bibitem[Johnson et~al., 1995]{johnson1995continuous}
Johnson, N.~L., Kotz, S., and Balakrishnan, N. (1995).
\newblock {\em Continuous Univariate Distributions, Volume 2}.
\newblock John {W}iley \& {S}ons.

\bibitem[Khan et~al., 2024]{khan2024testing}
Khan, R.~A., Pal, A., and Kundu, D. (2024).
\newblock Testing the goodness-of-fit of the stable distributions with applications to german stock index data and bitcoin cryptocurrency data.
\newblock {\em Statistics and Computing}, 34(4):128.

\bibitem[Kullback, 1997]{kullback1997information}
Kullback, S. (1997).
\newblock {\em Information Theory and Statistics}.
\newblock Courier Corporation.

\bibitem[Lee, 2019]{lee2019u}
Lee, A.~J. (2019).
\newblock {\em U-statistics:{ T}heory and {P}ractice}.
\newblock Routledge.

\bibitem[Liu and Zhao, 2023]{liu2023review}
Liu, P. and Zhao, Y. (2023).
\newblock A review of recent advances in empirical likelihood.
\newblock {\em Wiley Interdisciplinary Reviews: Computational Statistics}, 15:e1599.

\bibitem[Liu et~al., 2012]{liu2012intermediate}
Liu, T., Zhang, P., Dai, W.-S., and Xie, M. (2012).
\newblock An intermediate distribution between {G}aussian and {C}auchy distributions.
\newblock {\em Physica A: Statistical Mechanics and its Applications}, 391(22):5411--5421.

\bibitem[Mahdizadeh and Zamanzade, 2017]{mahdizadeh2017new}
Mahdizadeh, M. and Zamanzade, E. (2017).
\newblock New goodness of fit tests for the {C}auchy distribution.
\newblock {\em Journal of Applied Statistics}, 44(6):1106--1121.

\bibitem[Mahdizadeh and Zamanzade, 2019]{mahdizadeh2019goodness}
Mahdizadeh, M. and Zamanzade, E. (2019).
\newblock Goodness-of-fit testing for the {C}auchy distribution with application to financial modeling.
\newblock {\em Journal of King Saud University-Science}, 31(4):1167--1174.

\bibitem[Pekg{\"o}r, 2023]{pekgor2023novel}
Pekg{\"o}r, A. (2023).
\newblock A novel goodness-of-fit test for {C}auchy distribution.
\newblock {\em Journal of Mathematics}, 2023(1):9200213.

\bibitem[Shi, 1984]{shi1984approximate}
Shi, X. (1984).
\newblock The approximate independence of jackknife pseudo-values and the bootstrap methods.
\newblock {\em Journal of Wuhan Institute Hydra-Electric Engineering}, 2:83--90.

\bibitem[Stapf et~al., 1996]{stapf1996proton}
Stapf, S., Kimmich, R., Seitter, R.-O., Maklakov, A., and Skirda, V. (1996).
\newblock Proton and deuteron field-cycling nmr relaxometry of liquids confined in porous glasses.
\newblock {\em Colloids and Surfaces A: Physicochemical and Engineering Aspects}, 115:107--114.

\bibitem[Villase{\~n}or and Gonz{\'a}lez-Estrada, 2021]{villasenor2021goodness}
Villase{\~n}or, J. and Gonz{\'a}lez-Estrada, E. (2021).
\newblock Goodness-of-fit tests for {C}auchy distributions using data transformations.
\newblock {\em Advances in Statistics-Theory and Applications: Honoring the Contributions of Barry C. Arnold in Statistical Science}, pages 271--282.

\bibitem[Zhang, 2002]{zhang2002powerful}
Zhang, J. (2002).
\newblock Powerful goodness-of-fit tests based on the likelihood ratio.
\newblock {\em Journal of the Royal Statistical Society Series B: Statistical Methodology}, 64(2):281--294.

\end{thebibliography}

\end{document}